\documentclass[11pt]{article}
\title{Constructive proofs of some positivstellens\"atze for\\ compact semialgebraic subsets of $\real^d$}
\author{Gennadiy Averkov\footnote{Institute for Mathematical Optimization, Faculty of Mathematics, University of Magdeburg, 39106 Magdeburg, Germany; email: averkov@math.uni-magdeburg.de}}
\usepackage{amsmath,amsthm,amssymb,enumerate}
\usepackage{textcomp}
\usepackage[active]{srcltx}

\newcommand{\natur}{\mathbb{N}}
\newcommand{\real}{\mathbb{R}}
\newcommand{\bF}{\mathbb{F}}
\newcommand{\integer}{\mathbb{Z}}
\newcommand{\setcond}[2]{\left\{ #1 \, : \, #2\right\}}
\newcommand{\biglsetcond}[2]{\bigl\{ #1 \, : \, #2\bigr\}}
\newcommand{\cS}{\mathcal{S}}
\newcommand{\cM}{\mathcal{M}}
\newcommand{\cP}{\mathcal{P}}

\newcommand{\cF}{\mathcal{F}}
\newcommand{\cone}{\operatorname{cone}}
\newcommand{\sign}{\operatorname{sign}}
\newcommand{\term}[1]{\emph{#1}}
\newcommand{\eps}{\varepsilon}
\newcommand{\thmheader}[1]{{\upshape (#1).}}

\newcommand{\overtwocond}[2]{\stackrel{#1}{#2}}

\oddsidemargin=0cm
\textwidth=16cm
\hoffset=0cm
\marginparwidth=0cm
\voffset=-1cm
\topmargin=0cm
\headheight=5mm
\headsep=5mm
\textheight=23cm
\footskip=1cm

\newtheorem{nn}{}
\newtheorem{theorem}[nn]{Theorem}
\newtheorem{lemma}[nn]{Lemma}

\begin{document}
\maketitle

\begin{abstract}
	In a broad sense, positivstellens\"atze are results about representations of polynomials which are strictly positive on a given set. We give constructive and, to a large extent, elementary proofs of some known positivstellens\"atze for compact semialgebraic subsets of $\real^d$. The presented proofs extend and simplify arguments of Berr, W\"ormann (2001) and Schweighofer (2002, 2005).
\end{abstract}

\newtheoremstyle{itsemicolon}{}{}{\mdseries\rmfamily}{}{\itshape}{:}{ }{}
\newtheoremstyle{itdot}{}{}{\mdseries\rmfamily}{}{\itshape}{:}{ }{}
\theoremstyle{itdot}
\newtheorem*{msc*}{2010 Mathematics Subject Classification} 

\begin{msc*}
	Primary: 14P10;  Secondary: 12Y05, 52B11, 90C30
\end{msc*}


\newtheorem*{keywords*}{Keywords}

\begin{keywords*}
	polyhedron; polytope; positivity; positivstellensatz; preordering; semiring; quadratic module
\end{keywords*}

\section{Introduction}

In what follows $\bF$ is a subfield of $\real$, $d \in \natur$ and $X_1,\ldots,X_d$ are indeterminates. Let $X:=(X_1,\ldots,X_d)$. By $\bF[X]$ denote the ring of all polynomials in indeterminates $X_1,\ldots,X_d$ and with coefficients in $\bF$. A polynomial $f \in \bF[X]$ is called \term{linear} if $f$ has degree at most one. 
For $n \in \natur$ let $[n]:=\{1,\ldots,n\}$ and let $[0]=\emptyset$. If $n \in \natur$, $i \in [n]$ and $u \in \real^d$, then by $u_i$ we denote the $i$-th component of $u$. Given $U \subseteq \real$ let $U_{\ge 0} := \setcond{u \in U}{u \ge 0}$ and $U_{>0} := \setcond{u \in U}{u > 0}$. For $\cF \subseteq \bF[X]$ we define
\[
	\cone_\bF \cF := \setcond{ \sum_{i=1}^n \lambda_i f_i}{n \in \integer_{\ge 0} \ \text{and} \ f_i \in \cF, \lambda_i \in \bF_{\ge 0} \ \forall i \in [n]}.
\]

Throughout the text we consider $a_1,\ldots,a_m \in \bF[X]$ with $m \in \natur$ and $a:=(a_1,\ldots,a_m)$. With $a$ we associate the so-called \term{basic closed set} in $\real^d$ given by 
\[
	\{a_1 \ge 0,\ldots,a_m \ge 0\}:= \setcond{x \in \real^d}{a_1(x) \ge 0,\ldots, a_m(x) \ge 0}.
\]
We study polynomials strictly positive on $\{a_1 \ge 0,\ldots,a_m \ge 0\}$. Results about such polynomials are called positivstellens\"atze. See \cite{MR1659509,MR2383959} for background information from real algebraic geometry and \cite{MR1500280,MR2589247} for various areas of applications. 
By $a$ we also define the following subsets of $\bF[X]$:
\begin{align*}
	\cS_\bF(a) & := \cone_\bF \setcond{a_1^{k_1} \cdots a_m^{k_m}}{k_1,\ldots,k_m \in \integer_{\ge 0}}, \\
	\cP_\bF(a) & := \cone_\bF \setcond{p^2 a_1^{k_1} \cdots a_m^{k_m} }{p \in \bF[X], \ k_1,\ldots,k_m \in \{0,1\}}, \\
	\cM_\bF(a) & :=  \cone_\bF \setcond{p^2 a_i^{k_i}}{p \in \bF[X], \ i \in [m], \ k_i \in \{0,1\}},
\end{align*}
The set $\cS_\bF(a)$ is a semiring, $\cP_\bF(a)$ is a preordering and  $\cM_\bF(a)$ is a quadratic module. We have $\cS_\bF(a) \subseteq \cP_\bF(a)$, $\cM_\bF(a) \subseteq \cP_\bF(a)$ and, if $m=1$, then $\cP_\bF(a)=\cM_\bF(a)$. For the sake of brevity in what follows we shall omit the subscript $\bF$ and write $\cS(a), \cP(a)$ and $\cM(a)$. 

The main aim of this paper is to give a constructive and (mostly) elementary proof of the following theorem. 

\begin{theorem} \label{JP-H-P-S}
Let $S:=\{a_1 \ge 0,\ldots,a_m \ge 0\}$ be nonempty and bounded and let $f \in \bF[X]$ be strictly positive on $S$. Then the following statements hold.
\begin{itemize}
	\item[(JP)] If $\cM(a)$ contains linear polynomials $l_1,\ldots,l_k$, with $k \in \natur$, such that $\{l_1 \ge 0,\ldots,l_k \ge 0\}$ is bounded, then $f \in \cM(a)$.
	\item[(H)] If $a_1,\ldots,a_m$ are all linear, then $f \in \cS(a)$.
	\item[(P)] If for some $g \in \cM(a)$ the set $\{ g \ge 0\}$ is bounded, then $f \in \cM(a)$.
	\item[(S)] One has $f \in \cP(a)$.
\end{itemize}
\end{theorem}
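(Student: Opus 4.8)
The plan is to reduce all four assertions to a single analytic ingredient, P\'olya's theorem on positive forms over the simplex, and to settle the semiring and preordering cases (H), (S) first, deriving the quadratic-module cases (JP), (P) afterwards. P\'olya's theorem says that a homogeneous polynomial strictly positive on the standard simplex becomes, after multiplication by a sufficiently high power of $X_1 + \cdots + X_n$, a form with only nonnegative coefficients; the one nontrivial input is an effective lower bound on that power, obtainable by an elementary compactness-and-continuity estimate that keeps all coefficients inside $\bF$. Everything else is algebraic bookkeeping.

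I would prove (H) first. Given affine $a_1, \dots, a_m$ with $S$ a nonempty bounded polytope and $f > 0$ on $S$, a standard homogenization with a fresh variable $X_0$ identifies strict positivity of $f$ on $S$ with strict positivity of an associated form on a simplex, boundedness of $S$ being exactly what makes the intersection of the cone over $S$ with a simplex available. Applying P\'olya there and dehomogenizing then exhibits $f$ as an $\bF_{\ge 0}$-combination of products $a_1^{k_1} \cdots a_m^{k_m}$, that is, $f \in \cS(a)$.

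The step from (H) to (S) is where the real work lies. Since $S$ is bounded I can choose $N \in \bF_{>0}$ with $a_i \le N$ on $S$, so the map $x \mapsto (a_1(x), \dots, a_m(x))$ sends $S$ into the box $B = [0,N]^m \subseteq \real^m$. Introducing fresh indeterminates $Y = (Y_1, \dots, Y_m)$, the facet inequalities of $B$ are $Y_i \ge 0$ and $N - Y_i \ge 0$, and the substitution $Y_i \mapsto a_i$ carries $Y_i$ to the preordering generator $a_i$. The idea is to manufacture $F \in \bF[Y]$ strictly positive on $B$ with $F(a_1, \dots, a_m) = f$ up to a remainder already known to lie in $\cP(a)$, apply (H) to $F$ on the polytope $B$, and substitute. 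The main obstacle is twofold: first, $f$ need not be a polynomial in the $a_i$, so $F$ must be built by a Weierstrass/Bernstein-type approximation whose error is controlled inside $\cP(a)$; second, the substitution lands in $\cP(a)$ only once the complementary inequalities $N - a_i$ are themselves shown to lie in $\cP(a)$, which cannot be assumed without circularity. I expect to resolve the latter by a Bernstein partition-of-unity identity on $B$ that consumes only the generators $a_i$ and thereby feeds the required upper bounds back into the preordering. This is the crux of the argument.

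Finally, (JP) and (P) follow by first upgrading the hypotheses to the Archimedean property $N^2 - (X_1^2 + \cdots + X_d^2) \in \cM(a)$ for some $N \in \bF_{>0}$: for (JP) from the bounded polytope $\{l_1 \ge 0, \dots, l_k \ge 0\}$ together with (H), and for (P) from boundedness of $\{g \ge 0\}$. With the Archimedean property secured, a Putinar-type argument reduces membership of $f$ in $\cM(a)$ to the preordering case already settled. The delicate point throughout is that a quadratic module is not closed under multiplication, so the products of generators appearing in the Handelman and Schm\"udgen representations must be reinjected into $\cM(a)$; I would do this with Jacobi's identity $4 l_i l_j = (l_i + l_j)^2 - (l_i - l_j)^2$ together with the Archimedean bounds, which is precisely the mechanism behind the Jacobi--Prestel theorem.
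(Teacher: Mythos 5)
Your plan for (H), and for deriving (JP) and (P) once an Archimedean bound and the preordering case are in hand, is broadly in line with the paper; the genuine gap is in your step from (H) to (S), and it sits exactly where you yourself flag the crux. First, the pullback strategy is flawed at the outset: the map $x \mapsto (a_1(x),\ldots,a_m(x))$ need not be injective (take $m=1$, $a_1 = 1-\|X\|^2$ with $d\ge 2$, so that $a_1$ is constant on spheres and only radially symmetric polynomials factor through it), so in general there is no $F \in \bF[Y]$ with $F(a_1,\ldots,a_m)$ even close to $f$ on $S$, let alone equal to it up to a remainder in $\cP(a)$. Any such scheme must retain the original coordinates alongside the new variables; this is what the paper's Lemma~\ref{main lemma} does by applying P\'olya's theorem to $g(X,Y,Z) = f(X) + C\sum_j (Y_j - a_j(X))^2$ on a simplex in $\real^{d+m+1}$ and only then substituting $Y_j \mapsto a_j$. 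Second, and more fundamentally, the ingredient you defer to a ``Bernstein partition-of-unity identity'' is precisely the Archimedean property of the preordering --- that $N - a_i \in \cP(a)$, equivalently $\rho - \|X\|^2 \in \cP(a)$ for some $\rho$ --- and no elementary identity of that kind is known: this is W\"ormann's theorem, and the paper (like the constructive proofs it builds on) obtains it only by invoking Stengle's positivstellensatz (Theorem~\ref{stengle}) and then massaging the output with Lemma~\ref{generalized woermann's lemma}. As written, your outline would yield a fully elementary proof of Schm\"udgen's theorem, which is not available; without that identity the proof of (S), and hence of (P) which you make depend on it, does not close.

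Two smaller points. The identity $4 l_i l_j = (l_i+l_j)^2 - (l_i-l_j)^2$ does not by itself reinject products into $\cM(a)$, since $(l_i-l_j)^2$ enters with a negative sign; absorbing such terms requires quantitative bounds of the kind the paper packages into Lemma~\ref{woermann's first lemma}, Lemma~\ref{domain transfer tool} and Lemma~\ref{schweighofer's second lemma}, none of which your sketch supplies. And the paper obtains (P) more cheaply than you propose: rather than establishing $N^2 - \|X\|^2 \in \cM(a)$ from the boundedness of $\{g\ge 0\}$, it uses Lemma~\ref{domain transfer tool} to subtract an element of $\cM(a)$ from $f$ so that the difference is strictly positive on $\{g\ge 0\}$, and then applies (S) to the single constraint $g$, where $\cP(g)=\cM(g)\subseteq\cM(a)$.
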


If $a_1,\ldots,a_m$ are all linear and the polyhedron $S=\{a_1 \ge 0,\ldots,a_m \ge 0\}$ is nonempty and bounded, then (JP) implies that every polynomial strictly positive on $S$ necessarily belongs to $\cM(a)$. This was shown for the case $\bF=\real$ by Jacobi and Prestel \cite{MR1817508} with nonconstructive arguments (see also \cite[Theorem~5.3.8, Corollary~6.3.5 and Exercise 6.5.3]{MR1829790}). To the best of author's knowledge no constructive proof of (JP) has previously been available. Assertions~(H), (P) and (S) are well-known theorems of Handelman \cite{MR929582}, Putinar \cite{MR1254128} and Schm\"udgen \cite{MR1092173}, respectively. For further information on Theorem~\ref{JP-H-P-S} see also \cite[Chapters~6,\,7]{MR2383959}. The original proofs of (H), (P) and (S) are highly nonconstructive. Constructive proofs of (H) and (S) were given in \cite{MR1870623} (see also \cite[\S3]{MR1854339} for a related constructive proof of (H)). A constructive proof of (P) for the case $g = \rho - \sum_{i=1}^d X_i^2$, where $\rho \in \bF_{>0}$, was given in \cite{MR2142861}. In this paper we present an elementary and short proof of (H) and show that the arguments from \cite{MR1821179,MR1870623,MR2142861} can be used to give a simple constructive proof of (JP), (P) and (S). Our proof of Theorem~\ref{JP-H-P-S} is elementary with one exception: following \cite{MR1821179,MR1870623} in the proof of (S) we use Stengle's positivstellensatz. Since we prove (P) with the help of (S), also (P) depends on Stengle's positivstellensatz. In contrast to \cite{MR1870623} we do not use Hilbert's basis theorem (see, for example, \cite[Chapter\,2,\S\,5]{MR2290010}). As a consequence, on the algorithmic level one can avoid construction of Gr\"obner bases (see \cite[Chapter\,2]{MR2290010}), which is computationally expensive in general. Below we list the results which are used in the proof of Theorem~\ref{JP-H-P-S}.

\begin{theorem} \thmheader{Affine version of Farkas' lemma \cite[Corollary~7.1h]{MR874114}}
	Let $a_1,\ldots,a_m \in \bF[X]$ be all linear and let the polyhedron $S:=\{a_1 \ge 0,\ldots,a_m \ge 0\}$ be nonempty. Then every linear $f \in \bF[X]$ which is strictly positive on $S$ necessarily belongs to $\cone_\bF \{1,a_1,\ldots,a_m\}$.
\end{theorem}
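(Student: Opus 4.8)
The plan is to reduce the affine statement to the homogeneous (conic) form of Farkas' lemma by a homogenization argument, the latter being the combinatorial core that can itself be obtained constructively via Fourier--Motzkin elimination over the ordered field $\bF$. Write each linear polynomial in the form $a_i(x) = \langle \gamma_i, x \rangle + \gamma_{i0}$ with $\gamma_i \in \bF^d$, $\gamma_{i0} \in \bF$, and $f(x) = \langle c, x \rangle + c_0$. I introduce an auxiliary coordinate $y_0$ and pass to the homogeneous linear forms $\hat{a}_i(y) := \gamma_{i0} y_0 + \langle \gamma_i, (y_1,\ldots,y_d)\rangle$ and $\hat{f}(y) := c_0 y_0 + \langle c, (y_1,\ldots,y_d)\rangle$ on $\bF^{d+1}$, together with the polyhedral cone $K := \setcond{y \in \real^{d+1}}{y_0 \ge 0, \ \hat{a}_1(y) \ge 0, \ldots, \hat{a}_m(y) \ge 0}$.

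The key step is to show that $\hat{f} \ge 0$ on $K$, and I would split into two cases. If $y \in K$ has $y_0 > 0$, then $x := (y_1,\ldots,y_d)/y_0$ satisfies $a_i(x) = \hat{a}_i(y)/y_0 \ge 0$, so $x \in S$ and hence $\hat{f}(y) = y_0 f(x) > 0$ by the assumed strict positivity of $f$ on $S$. If instead $y_0 = 0$, then $v := (y_1,\ldots,y_d)$ lies in the recession cone of $S$, i.e.\ $\langle \gamma_i, v\rangle \ge 0$ for all $i$; here the nonemptiness of $S$ enters, for fixing any $x_* \in S$ the point $x_* + t v$ stays in $S$ for every $t \ge 0$, whence $f(x_* + tv) = f(x_*) + t \hat{f}(y) > 0$ for all $t \ge 0$, and this forces $\hat{f}(y) = \langle c, v\rangle \ge 0$. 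Thus $\hat{f} \ge 0$ throughout $K$.

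Now the homogeneous form of Farkas' lemma (cf.\ \cite{MR874114}), applied over $\bF$, yields nonnegative scalars $\lambda_0, \lambda_1,\ldots,\lambda_m \in \bF_{\ge 0}$ with $\hat{f} = \lambda_0 \, y_0 + \sum_{i=1}^m \lambda_i \hat{a}_i$ as forms on $\bF^{d+1}$. Substituting $y_0 = 1$ and $(y_1,\ldots,y_d) = x$ dehomogenizes this identity to $f = \lambda_0 \cdot 1 + \sum_{i=1}^m \lambda_i a_i$, which is exactly the assertion $f \in \cone_\bF\{1,a_1,\ldots,a_m\}$.

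I expect the recession-cone case $y_0 = 0$ to be the main conceptual obstacle, since it is where both the nonemptiness of $S$ and the affine (as opposed to purely conic) nature of the problem enter, while the strict positivity on $S$ is needed only to launch the comparison as $t \to \infty$. A secondary but essential point is that the resulting coefficients must lie in $\bF$ rather than merely in $\real$: this is guaranteed because the Fourier--Motzkin procedure underlying the homogeneous Farkas lemma combines the given inequalities using only field operations on their $\bF$-coefficients together with nonnegative scalings, so the produced certificate automatically has coefficients in $\bF_{\ge 0}$.
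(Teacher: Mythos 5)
The paper does not prove this statement at all: it is imported verbatim as a known result, with a pointer to \cite[Corollary~7.1h]{MR874114}, and is then used as a black box in the proofs of (H) and (JP). So there is no proof in the paper to compare against; what you have written is a self-contained derivation of the cited fact. Your argument is correct and is essentially the standard textbook reduction: the case split on $K$ is handled properly (for $y_0>0$ you rescale into $S$; for $y_0=0$ you use a point $x_*\in S$ --- this is exactly where nonemptiness is indispensable --- and let $t\to\infty$ to force $\langle c,v\rangle\ge 0$), and the passage from real nonnegativity on $K$ to the $\bF$-version of the homogeneous Farkas lemma is legitimate because $\bF\subseteq\real$ makes the real hypothesis only stronger, while Fourier--Motzkin performed on the $\bF$-coefficients indeed returns multipliers in $\bF_{\ge 0}$. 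Two small remarks: first, your proof in fact never needs the strict positivity of $f$ on $S$ --- nonnegativity already suffices, and in the $y_0=0$ case you only use $f(x_*+tv)\ge 0$; second, the one ingredient you leave unproved (the homogeneous Farkas lemma for polyhedral cones over an ordered field) is itself constructive via Fourier--Motzkin, so your derivation is consistent with the paper's overall constructive program, which is a genuine, if modest, gain over the paper's bare citation.
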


\begin{theorem}  \thmheader{P\'olya's theorem \cite{Polya1928}, \cite[\S2.24]{MR944909}} \label{polya:thm} Let $f \in \bF[X]$ be homogeneous and strictly positive on the simplex 
\[
	\Delta := \setcond{x \in \real_{\ge 0}^d}{x_1 + \cdots + x_d =1}.
\] 
Then there exists $N \in \integer_{\ge 0}$ such that $(\sum_{i=1}^d X_i)^N f(X) \in \cS(X)$.
\end{theorem}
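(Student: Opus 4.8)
The plan is to prove the coefficient-wise version of the statement: for $N$ large enough, every coefficient of the homogeneous polynomial $(\sum_{i=1}^d X_i)^N f(X)$ is a nonnegative element of $\bF$. Since $\cS(X) = \cone_\bF\setcond{X_1^{k_1}\cdots X_d^{k_d}}{k_1,\ldots,k_d \in \integer_{\ge 0}}$ is exactly the set of polynomials all of whose coefficients lie in $\bF_{\ge 0}$, this yields the claim. First I would record the only nonelementary input, namely compactness: as $f$ is continuous and strictly positive on the compact set $\Delta$, there is a constant $\delta \in \real_{>0}$ (and, shrinking it, one may take $\delta \in \bF_{>0}$ to keep the argument constructive) with $f \ge \delta$ on $\Delta$.

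Write $n := \deg f$ and $f = \sum_{|\alpha|=n} c_\alpha X^\alpha$, summed over multi-indices $\alpha \in \integer_{\ge 0}^d$ with $\alpha_1 + \cdots + \alpha_d = n$. Expanding $(\sum_{i=1}^d X_i)^N$ by the multinomial theorem and collecting terms, the coefficient of a monomial $X^\beta$ with $|\beta| = N+n =: D$ in the product is
\[
	a_\beta = \sum_{|\alpha|=n} \binom{N}{\beta-\alpha}\, c_\alpha, \qquad \binom{N}{\beta-\alpha} := \frac{N!}{\prod_{i=1}^d (\beta_i-\alpha_i)!},
\]
with the convention that this is $0$ whenever some $\beta_i - \alpha_i < 0$. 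The key algebraic step is to factor out the positive rational $N!/\beta!$ via $\binom{N}{\beta-\alpha} = (N!/\beta!)\prod_{i=1}^d \beta_i(\beta_i-1)\cdots(\beta_i-\alpha_i+1)$, so that $a_\beta \ge 0$ is equivalent to
\[
	P(\beta) := \sum_{|\alpha|=n} c_\alpha \prod_{i=1}^d \beta_i(\beta_i-1)\cdots(\beta_i-\alpha_i+1) \ge 0,
\]
where the convention is now automatic, since a factor $(\beta_i-j)$ vanishes exactly when it should.

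Finally I would identify $P(\beta)$ as a perturbation of $f$ at the simplex point $\beta/D$. Each falling-factorial product equals $\beta^\alpha$ plus a polynomial in $\beta$ of total degree at most $n-1$, so $P(\beta) = f(\beta) + E(\beta)$, where $E$ has total degree at most $n-1$ with coefficients bounded in terms of $f, n, d$ only. By homogeneity $f(\beta) = D^n f(\beta/D)$, and since $\beta/D \in \Delta$ we get $f(\beta) \ge \delta D^n$; on the other hand $\beta_i \le D$ for every $i$ forces $|E(\beta)| \le c\,D^{n-1}$ for a constant $c$ independent of $\beta$ and $N$. Hence $P(\beta) \ge \delta D^n - c D^{n-1} = D^{n-1}(\delta D - c)$, which is positive once $D > c/\delta$, i.e.\ for all $N$ exceeding an explicit bound. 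I expect the main obstacle to be exactly this uniformity: the estimate must hold simultaneously for every $\beta$ with $|\beta| = D$, and it does so only because the dominant term $\delta D^n$ stems from the compactness bound $\delta$ and outgrows the error $c D^{n-1}$ uniformly. As every quantity here is explicit, the threshold for $N$ can be written down from $\delta$ and the coefficients of $f$, which makes the argument constructive.
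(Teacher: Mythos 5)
The paper does not prove this theorem; it cites it as a known elementary result, and the argument in the cited sources (P\'olya \cite{Polya1928}, Hardy--Littlewood--P\'olya \cite[\S2.24]{MR944909}, and the quantitative version in \cite{MR1854339}) is exactly the coefficient-wise computation you give: write the coefficient of $X^\beta$ as $(N!/\beta!)\,P(\beta)$ with $P(\beta)=f(\beta)+E(\beta)$, $\deg E\le n-1$, and beat the error uniformly by the compactness bound $f\ge\delta$ on $\Delta$. Your proof is correct and is essentially this standard argument, so it matches the approach the paper relies on.
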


Note that the proof of Theorem~\ref{polya:thm} given in  \cite{Polya1928} and \cite[\S2.24]{MR944909} is based on elementary arguments. A bound on $N$ can be found in \cite[Theorem~1]{MR1854339}.

\begin{theorem} \thmheader{Stengle's positivstellensatz \cite{MR0332747}} \label{stengle} Let $f \in \bF[X]$ be strictly positive on $S(a)$. Then there exist $g,h \in \cP(a)$ such that $f=(1+g)/(1+h)$.
\end{theorem}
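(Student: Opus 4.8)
The plan is to prove this classical Krivine--Stengle statement (the strict-positivity case of the Positivstellensatz) by the standard route through the theory of orderings on $\bF[X]$, combined with a transfer principle. Write $T := \cP(a)$ for the preordering generated by $a_1,\ldots,a_m$. Multiplying out, the asserted identity $f = (1+g)/(1+h)$ with $g,h \in T$ says $(1+h)f = 1+g$, i.e.\ that $f$ admits an algebraic positivity certificate; the implication from such a certificate back to strict positivity of $f$ on $S(a)$ is immediate, since on $S(a)$ both $1+h$ and $1+g$ are $\ge 1 > 0$. The whole content is the converse, and I would establish it by contradiction: assuming no certificate exists, I would manufacture a point of $S(a)$ at which $f$ is nonpositive.

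First I would translate the negation into properness of a preordering. Existence of a certificate in the multiplier form $pf = 1+q$ (with $p,q \in T$) is equivalent to $-1 \in T - fT$, and $T - fT$ is precisely the preordering generated by $T$ together with the single element $-f$ (here $(-f)^2 = f^2 \in T$, so no higher powers of $f$ intervene). Hence, if the certificate fails, $T' := T - fT$ is a proper preordering, i.e.\ $-1 \notin T'$. By Zorn's lemma I extend $T'$ to a preordering $P$ that is maximal subject to $-1 \notin P$. A standard argument shows that such a maximal proper preordering is an ordering: $P \cup (-P) = \bF[X]$, the support $\mathfrak{p} := P \cap (-P)$ is a prime ideal, and $P$ induces a total ordering $\le$ on the fraction field $K$ of the domain $\bF[X]/\mathfrak{p}$, in which $a_i \ge 0$ for all $i$ (as $a_i \in T \subseteq P$) and $f \le 0$ (as $-f \in T' \subseteq P$).

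The decisive step is to realize this abstract ordering by an honest point of $\real^d$. Because $\bF_{\ge 0} \subseteq T \subseteq P$, the ordering $\le$ restricts on $\bF$ to its standard ordering as a subfield of $\real$. Passing to the real closure $R$ of $(K,\le)$, a real closed field containing $\bF$ as an ordered subfield, the images $\xi = (\bar X_1,\ldots,\bar X_d) \in R^d$ of the indeterminates satisfy $a_i(\xi) \ge 0$ for every $i$ and $f(\xi) \le 0$. Thus the existential statement asserting that some point satisfies $a_1 \ge 0,\ldots,a_m \ge 0$ and $f \le 0$, a first-order formula with coefficients in $\bF$, holds in $R$. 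Since $R$ and $\real$ are both real closed fields extending the ordered field $\bF$, the Tarski--Seidenberg transfer principle (model completeness of real closed fields, equivalently the Artin--Lang homomorphism theorem) forces the same statement to hold in $\real$. Hence there is $x \in \real^d$ with $x \in S(a)$ and $f(x) \le 0$, contradicting the hypothesis that $f$ is strictly positive on $S(a)$.

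I expect the main obstacle to be exactly this transfer step: the passage from a purely algebraic ordering of $\bF[X]/\mathfrak{p}$ to a genuine real solution rests on the Tarski--Seidenberg/Artin--Lang machinery, which is the nonelementary and inherently nonconstructive core of the theorem --- precisely why the present paper imports it as a black box rather than reproving it. A secondary, routine-but-nontrivial point is the structure theory showing that a maximal proper preordering is an ordering inducing a field ordering, which needs the standard lemmas on preorderings and prime cones. Finally, I would note that the quotient form in the statement and the multiplier form $pf = 1+q$ are interchangeable ways of certifying strict positivity of $f$ on $S(a)$, so the argument may target whichever form the machinery delivers most directly and then rewrite.
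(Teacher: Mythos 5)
The paper does not prove this theorem at all: Stengle's positivstellensatz is imported verbatim from \cite{MR0332747} and used as a black box, and the introduction explicitly identifies it as the one nonelementary, nonconstructive ingredient in the whole argument. Your proposal supplies the standard classical proof, and it is correct in outline: reduce to showing $-1 \in \cP(a) - f\cP(a)$, observe that $\cP(a)-f\cP(a)$ is the preordering generated by $a_1,\ldots,a_m,-f$ (since $(-f)^2$ is a square), extend a putative proper preordering to a maximal one, use the structure theory of maximal proper preorderings to get an ordering on the fraction field of $\bF[X]/\mathfrak{p}$ with $a_i \ge 0$ and $f \le 0$, and transfer the resulting point from the real closure back to $\real^d$ via Tarski--Seidenberg. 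This is consistent with the paper's own characterization of the result as inherently nonconstructive (Zorn's lemma plus model-theoretic transfer). Two places deserve to be made explicit. First, the passage from the multiplier form $pf = 1+q$ to the quotient form of the statement: from $pf = 1+q$ one gets $(1+p+q)f = 1 + q + pf^2$, so $h := p+q$ and $g := q + pf^2$ work; this is easy but should be written out since the theorem asserts the quotient form. Second, the transfer step as you phrase it (``$R$ and $\real$ are both real closed fields extending the ordered field $\bF$'') needs the intermediary of the real closure $\widetilde{\bF}$ of $(\bF,\le)$, which embeds order-compatibly and hence (by model completeness) elementarily into both $R$ and $\real$; only through $\widetilde{\bF}$ does the existential sentence pass from $R$ to $\real$. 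With these two routine points filled in, your argument is the standard complete proof of the cited theorem.
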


\section{Proofs}

If $l \in \real[X] \setminus \{0\}$ is linear homogeneous and $f \in \real[X] \setminus \{0\}$, we call the polynomial 
$f_0(X) := l(X)^{\deg f} f\left(\frac{X_1}{l(X)},\ldots,\frac{X_d}{l(X)}\right)$
the \term{homogenization} of $f$ with respect to $l$. For $f \in \bF[X]$, writing
$f = \sum_{\alpha} c_\alpha X^\alpha := \sum_{\alpha} c_\alpha X_1^{\alpha_1} \cdots X_d^{\alpha_d}$
we assume that the sum extends over $\alpha \in \integer_{\ge 0}^d$ and the coefficients $c_\alpha \in \bF$ are zero for all but finitely many $\alpha$'s. For $\alpha \in \integer_{\ge 0}^d$ we define $|\alpha|:=\alpha_1 + \cdots + \alpha_d$. We also introduce the notation $\|X\|^2 : = \sum_{i=1}^d X_i^2.$

The following lemma is used in the proof of (H).
\begin{lemma} \label{main lemma}
	Let $f \in \bF[X]$ be strictly positive on $S:=\{a_1 \ge 0,\ldots,a_m \ge 0\}$.
	Let $l_1,\ldots,l_d \in \bF[X]$ be linear and such that $\bF[X]=\bF[l_1,\ldots,l_d]$.
	Let
	$q :=t - \sum_{i=1}^d l_i - \sum_{j=1}^m a_j,$ where $t \in \bF_{>0}$.
	Then $f \in \cS(l_1,\ldots,l_d,a_1,\ldots,a_m,q)$.
\end{lemma}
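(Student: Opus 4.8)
The plan is to reduce the statement to P\'olya's theorem (Theorem~\ref{polya:thm}) applied in $n := d+m+1$ variables. Introduce fresh indeterminates $Y_1,\ldots,Y_n$ and the $\bF$-algebra homomorphism $\psi \colon \bF[Y_1,\ldots,Y_n] \to \bF[X]$ determined by $\psi(Y_i) = l_i$ for $i \in [d]$, $\psi(Y_{d+j}) = a_j$ for $j \in [m]$ and $\psi(Y_n) = q$. Writing $b_k := \psi(Y_k)$ and $\Sigma := \sum_{k=1}^n Y_k$, the definition of $q$ gives $\psi(\Sigma) = \sum_{i=1}^d l_i + \sum_{j=1}^m a_j + q = t$. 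Since $\psi$ sends a polynomial with coefficients in $\bF_{\ge 0}$ to an element of $\cS(l_1,\ldots,l_d,a_1,\ldots,a_m,q)$, it suffices to produce $N \in \integer_{\ge 0}$ and a homogeneous $H \in \bF[Y_1,\ldots,Y_n]$ with $\psi(H) = f$ and $\Sigma^N H \in \cS(Y_1,\ldots,Y_n)$; then $\psi(\Sigma^N H) = t^N f$ lies in the semiring, and dividing by $t^N \in \bF_{>0}$ yields $f \in \cS(l_1,\ldots,l_d,a_1,\ldots,a_m,q)$. By P\'olya's theorem such an $N$ exists as soon as $H$ is positive on the simplex $\Delta_n := \setcond{z \in \real_{\ge 0}^n}{z_1 + \cdots + z_n = 1}$.

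First I would record the geometric picture. Consider $\Phi \colon \real^d \to \real^n$, $\Phi(x) := \tfrac1t(l_1(x),\ldots,l_d(x),a_1(x),\ldots,a_m(x),q(x))$; by the previous paragraph $\Phi$ maps $\real^d$ into the hyperplane $\{z_1 + \cdots + z_n = 1\}$, and the set $T := \{l_1 \ge 0,\ldots,l_d \ge 0, a_1 \ge 0,\ldots,a_m \ge 0, q \ge 0\}$ satisfies $\Phi(T) = \Phi(\real^d) \cap \Delta_n$. On $T$ each coordinate $l_i$ lies in $[0,t]$, because the nonnegative quantities $l_i,a_j,q$ sum to $t$; as $l_1,\ldots,l_d$ form a coordinate system, $T$ is compact, and since $T \subseteq S$ we get $f > 0$ on $T$. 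For a homogeneous $H$ of degree $D := \deg f$ the identity $\psi(H)(x) = t^D H(\Phi(x))$ shows that $\psi(H) = f$ forces $H > 0$ on $\Phi(T) = \Phi(\real^d) \cap \Delta_n$; the difficulty is that this only controls $H$ on a slice of $\Delta_n$.

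Next I would build a first candidate and then repair it. Writing $f = \sum_\alpha c_\alpha l^\alpha$ as a polynomial in $l_1,\ldots,l_d$ of degree $D$, set
\[
	H_0 := \sum_\alpha c_\alpha \Bigl(\tfrac{\Sigma}{t}\Bigr)^{D - |\alpha|} Y_1^{\alpha_1}\cdots Y_d^{\alpha_d},
\]
which is homogeneous of degree $D$ and satisfies $\psi(H_0) = f$ because $\psi(\Sigma/t) = 1$. On $\Delta_n$, however, $H_0$ depends only on $z_1,\ldots,z_d$ and need not be positive off the slice. To correct this I would attach to each dependent coordinate a homogeneous form $R_{d+1},\ldots,R_n \in \ker\psi$ whose common zero locus meets $\Delta_n$ exactly in $\Phi(\real^d) \cap \Delta_n$: concretely $R_{d+j}$ homogenizes (with respect to $\Sigma/t$) the relation expressing $a_j$ in $l_1,\ldots,l_d$, and $R_n$ homogenizes $q = t - \sum_i l_i - \sum_j a_j$. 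In the application to (H) the $a_j$ are linear, so these $R_k$ are simply linear forms and the claim about their zero locus is transparent.

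The main obstacle is to upgrade positivity on the slice to positivity on all of $\Delta_n$, and this is where the $R_k$ enter. Set
\[
	H := H_0 + \lambda \sum_{k=d+1}^n R_k^2 \Bigl(\tfrac{\Sigma}{t}\Bigr)^{D - 2\deg R_k},
\]
raising the degree $D$ beforehand by a power of $\Sigma/t$ if needed to make all exponents nonnegative. Each added summand lies in $\ker\psi$, so $\psi(H) = \psi(H_0) = f$; on $\Delta_n$ the added term is nonnegative and vanishes exactly on the slice, where $\sum_k R_k^2$ is moreover bounded below by a positive constant off any neighbourhood of the slice by compactness of $\Delta_n$. Since $H_0$ is positive on the compact slice, hence on a neighbourhood of it, while $H_0$ is bounded below on $\Delta_n$, a sufficiently large $\lambda \in \bF_{>0}$ makes $H > 0$ on $\Delta_n$; an explicit admissible $\lambda$ can be read off from bounds for $H_0$ and a lower bound for $\sum_k R_k^2$ away from the slice. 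With such an $H$, P\'olya's theorem supplies $N$ with $\Sigma^N H \in \cS(Y_1,\ldots,Y_n)$, and applying $\psi$ and dividing by $t^N$ completes the argument as above.
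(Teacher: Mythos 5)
Your argument is essentially the paper's own proof: the paper likewise introduces auxiliary indeterminates for the $a_j$ and for $q$, penalizes with a large multiple of the squared relations $(Y_j-a_j(X))^2$ to pass from positivity on the slice to positivity on the whole simplex by compactness, homogenizes, applies P\'olya's theorem, and substitutes back. The only differences are cosmetic (you homogenize before adding the penalty terms and phrase the back-substitution as a homomorphism $\psi$; your relation $R_n$ for $q$ is identically zero since that relation is already absorbed into $\Sigma$), and the proof is correct.
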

\begin{proof} Without loss of generality let $(l_1,\ldots,l_d)=X$. We introduce indeterminates $Y_1,\ldots,Y_m$ and $Z$ and define $Y:=(Y_1,\ldots,Y_m)$. Consider 
	\begin{align*}
		\sigma(X,Y,Z) & := \frac{1}{t} \Bigl( \sum_{i=1}^d X_i + \sum_{j=1}^m Y_j + Z \Bigr), \\
		g(X,Y,Z) & := f(X) + C \sum_{j=1}^m (Y_j-a_j(X))^2, \ \text{where} \ C \in \bF_{>0}, \\
		\Delta & :=\setcond{(x,y,z) \in \real_{\ge 0}^d \times \real_{\ge 0}^m \times \real_{\ge 0}}{\sigma(x,y,z) = 1}, \\
		A & := \setcond{(x,y,z) \in \Delta}{y_1=a_1(x), \ldots, y_m =a_m(x)},
	\end{align*}
   For every $C \in \bF_{>0}$ the polynomial $g(X,Y,Z)$
	is strictly positive on $A$. Since $A$ and $\Delta$ are compact, we can fix a sufficiently large $C \in \bF_{>0}$ for which $g$ becomes strictly positive on $\Delta$.
	Let $g_0$ be the homogenization of $g$ with respect to $\sigma$. Then also $g_0$ is strictly positive on $\Delta$. By Theorem~\ref{polya:thm} applied to $g_0$ and the simplex $\Delta$, there exists $N \in \integer_{\ge 0}$ such that  $h(X,Y,Z):=\sigma(X,Y,Z)^N g_0(X,Y,Z) \in \cS(X,Y,Z)$. In $h(X,Y,Z)$ we successively substitute $Z$ with $t - \sum_{i=1}^d X_i - \sum_{j=1}^m Y_j$ and $Y_j$ with $a_j(X)$ for every $j \in[m]$. We obtain $f(X) = h(X,a,q) \in \cS(X,a,q)$.
\end{proof}

\begin{proof}[Proof of (H)]
	Assume that $a_1,\ldots,a_m$ are all linear. We can choose $t_1,\ldots,t_d \in \bF$ such that $l_i:=t_i+X_i$ is nonnegative on $S$ for every $i \in [d]$. Having chosen $t_1,\ldots,t_d$ we choose a sufficiently large $t \in \bF_{>0}$ such that the polynomial $q$ from Lemma~\ref{main lemma} is nonnegative on $S$. By Lemma~\ref{main lemma}, $f \in \cS(l_1,\ldots,l_d, a_1,\ldots,a_m,q)$. By the Farkas lemma $l_1,\ldots,l_d, q\in \cone_\bF \{1,a_1,\ldots,a_m\}$. Hence $ f \in \cS(1,a_1,\ldots,a_m) = \cS(a)$.
\end{proof}

	If $n \in \natur$ and $A_1,\ldots,A_n,B_1,\ldots,B_n$ are indeterminates, then 
	\begin{align} 
		A_1 \cdots A_n \pm B_1 \cdots B_n & = \frac{1}{2^{n-1}} \sum_{e \in E_{\pm}^n} \prod_{i=1}^n (A_i + e_i B_i) \nonumber \\
		& \in \cS(A_1 + B_1,\cdots,A_n+B_n, A_1 - B_1,\ldots,A_n - B_n), \label{prod:id}
	\end{align}
	where $E_+^n$ (resp. $E_-^n$) is the set of all vectors $e \in \{-1,1\}^n$ with even (resp. odd) number of components equal to $-1$. The latter can be easily proved (e.g., by induction on $n$). 

The following lemmas are (essentially) borrowed from \cite{MR1821179,MR1870623,MR2142861}. We somewhat simplify their formulations and the proofs. Lemma~\ref{woermann's first lemma} is a somewhat more explicit version of Lemma~2.1 from \cite{MR1870623} (see also \cite[Lemma~1]{MR1821179}).

\begin{lemma} \label{woermann's first lemma} Let $\rho \in \bF_{>0}$ and let
$f = \sum_\alpha c_\alpha X^\alpha \in \bF[X]$. We define
$t(f,\rho):= \sum_\alpha |c_\alpha| (\rho+1)^{|\alpha|}$.
Then $t(f,\rho) \pm f \in \cP(\rho-\|X\|^2)$.
\end{lemma}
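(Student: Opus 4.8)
The plan is to exploit that $\cP(\rho-\|X\|^2)$ is a preordering, hence closed under addition and under multiplication by elements of $\bF_{\ge 0}$, and to reduce everything to individual monomials. Writing $t(f,\rho)\pm f=\sum_\alpha\bigl(|c_\alpha|(\rho+1)^{|\alpha|}\pm c_\alpha X^\alpha\bigr)$ and grouping term by term, it suffices to show that for every $\alpha$ both $(\rho+1)^{|\alpha|}+X^\alpha$ and $(\rho+1)^{|\alpha|}-X^\alpha$ lie in $\cP(\rho-\|X\|^2)$. Indeed, each summand equals $|c_\alpha|\bigl((\rho+1)^{|\alpha|}\pm\sign(c_\alpha)X^\alpha\bigr)$, so it is a nonnegative $\bF$-multiple of one of these two monomial expressions, and the finite sum then stays in the preordering.

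I would prove the monomial claim by induction on $n=|\alpha|$, writing $X^\alpha=X_{i_1}\cdots X_{i_n}$. The base case $n=0$ is trivial since $1\pm 1\in\cP(\rho-\|X\|^2)$. The heart of the argument is the following inductive step: if $b\in\bF_{\ge 0}$ and $b\pm p\in\cP(\rho-\|X\|^2)$, then $(\rho+1)b\pm pX_i\in\cP(\rho-\|X\|^2)$. To establish it, I would first record the two elementary memberships $\rho+1-X_i^2\in\cP(\rho-\|X\|^2)$ and $2\rho+1-X_i^2\in\cP(\rho-\|X\|^2)$, each obtained by adding the generator $\rho-\|X\|^2$ to a nonnegative constant and to the sum of squares $\sum_{j\ne i}X_j^2$. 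Next I would use the square identity $\tfrac12(b+p)(X_i-1)^2+\tfrac12(b-p)(X_i+1)^2=b(X_i^2+1)-2pX_i$, whose left-hand side lies in $\cP(\rho-\|X\|^2)$ because the preordering is multiplicatively closed and contains all squares, so each product of a square with $b\pm p$ is in it. Adding $b(2\rho+1-X_i^2)\in\cP(\rho-\|X\|^2)$ then yields $2\bigl[(\rho+1)b-pX_i\bigr]\in\cP(\rho-\|X\|^2)$, and dividing by $2$ gives $(\rho+1)b-pX_i\in\cP(\rho-\|X\|^2)$; the $+pX_i$ case follows by the symmetry $p\mapsto -p$, under which the hypothesis $b\pm p\in\cP(\rho-\|X\|^2)$ is invariant. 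Applying this step with $b=(\rho+1)^n$ and $p=X_{i_1}\cdots X_{i_n}$ advances the induction from $n$ to $n+1$.

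I expect the only genuine subtlety to be producing \emph{exactly} the constant $\rho+1$ in the inductive step rather than an oversized $\rho+2$: the naive combination of the two squares with $b(\rho+1-X_i^2)$ overshoots, and it is precisely this point that forces the slightly less obvious auxiliary membership $2\rho+1-X_i^2\in\cP(\rho-\|X\|^2)$, whose validity relies on $\rho\ge 0$. Once that calculation is arranged, the remaining steps — the monomial-by-monomial reduction and the sign symmetry handling the $\pm$ — are routine bookkeeping.
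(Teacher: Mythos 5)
Your proof is correct and is essentially the paper's argument: both reduce the claim, monomial by monomial, to $(\rho+1)^{|\alpha|}\pm X^\alpha\in\cP(\rho-\|X\|^2)$ and then run an induction on $|\alpha|$, which the paper packages into the product identity \eqref{prod:id} together with the membership $\rho+1\pm X_i\in\cP(\rho-\|X\|^2)$ from \eqref{X_i:eq}. Indeed, expanding the paper's one-variable step $(\rho+1)b-pX_i=\tfrac12(b+p)(\rho+1-X_i)+\tfrac12(b-p)(\rho+1+X_i)$ via \eqref{X_i:eq} yields exactly your certificate $\tfrac14(b+p)(X_i-1)^2+\tfrac14(b-p)(X_i+1)^2+\tfrac12\,b\,(2\rho+1-X_i^2)$, so the two representations coincide.
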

\begin{proof}
	Since $t(f,\rho)=t(-f,\rho)$ it suffices to show $t(f,\rho) + f \in \cP(\rho-\|X\|^2)$. We have
	\[
		t(f,\rho) + f = \sum_\alpha (c_\alpha (\rho+1)^{|\alpha|} \pm c_\alpha X^\alpha) = \sum_{\alpha} |c_\alpha| \left((\rho+1)^{|\alpha|} + \sign(c_\alpha) X^\alpha \right).
	\]
	Let $\alpha$ be an arbitrary multi-index with $\alpha \ne (0,\ldots,0)$. Let us apply \eqref{prod:id} for $n=|\alpha|$. Substituting $A_1, \ldots, A_n$ with $\rho$ and $B_1,\ldots,B_n$ with appropriate  $X_i$'s, we see that $(\rho+1)^{|\alpha|} + \sign(c_\alpha) X^\alpha \in \cP(\rho+1 - X_1,\ldots,\rho+1 - X_d, \rho+1 + X_1,\ldots,\rho+1+X_d)$. For $i \in [d]$  one has
	\begin{equation}
		\rho+1 \pm X_i = \frac{1}{2} \Bigl((\rho+1) + (1 \pm X_i)^2 + \sum_{j \in [d] \setminus \{i\}} X_j^2 + (\rho - \|X\|^2)\Bigr) \in \cP(\rho-\|X\|^2).
			\label{X_i:eq}
	\end{equation}
	Hence $t(f,\rho) + f \in \cP(\rho-\|X\|^2)$.
\end{proof}

Lemma~\ref{domain transfer tool} is similar to Lemma~8 from \cite{MR2142861}.

\begin{lemma} \label{domain transfer tool} Let $S:=\{a_1 \ge 0,\ldots,a_m \ge 0\}$ and let $f \in \bF[X]$ be strictly positive on $S$. Let $B$ be a compact subset of $\real^d$. Then there exists $g \in \cM(a)$ such that $f - g$ is strictly positive on $B$.
\end{lemma}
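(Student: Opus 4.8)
The plan is to build $g$ as an explicit finite sum $g=\sum_{i=1}^m q(a_i)^2 a_i$, where $q$ is a single univariate polynomial chosen so that each summand is hugely negative exactly where the corresponding $a_i$ is very negative, while being only mildly positive where $a_i\ge 0$. Each summand has the form $p^2 a_i$ with $p=q(a_i)\in\bF[X]$, so $g\in\cM(a)$ automatically, and the whole difficulty is concentrated in the choice of $q$.

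First I would separate the two relevant regimes on $B$ by a single compactness argument. Consider $\psi:=\max\{f,-a_1,\dots,-a_m\}$, which is continuous on $B$; it is strictly positive there, since $\psi(x)\le 0$ would force $f(x)\le 0$ together with $a_1(x)\ge 0,\dots,a_m(x)\ge 0$, i.e.\ $x\in S$ with $f(x)\le 0$, contradicting positivity of $f$ on $S$ (note that $S$ is not assumed bounded, and this step does not need it). Hence $\kappa:=\min_{x\in B}\psi(x)>0$, and for every $x\in B$ \emph{either} $f(x)\ge\kappa$ \emph{or} $a_i(x)\le-\kappa$ for some $i$. I would also fix $M:=\max_{x\in B}|f(x)|$ and a bound $A$ with $|a_i(x)|\le A$ for all $x\in B$ and $i\in[m]$.

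The key step, and the main obstacle, is the construction of $q$. Given a small $\delta>0$ and a large $c>0$ (to be calibrated), I want $q(s)^2 s\le\delta$ for $s\in[0,A]$ and $q(s)^2 s\le -c$ for $s\in[-A,-\kappa]$; for this it suffices that $|q|\le\sqrt{\delta/A}$ on $[0,A]$ and $|q|\ge\sqrt{c/\kappa}$ on $[-A,-\kappa]$. Since $[0,A]$ and $[-A,-\kappa]$ are disjoint compact intervals, such a $q$ (with coefficients in $\bF$, obtained e.g.\ by density of $\rational$) exists by elementary univariate interpolation; concretely one may take a scaled Chebyshev polynomial adapted to $[0,A]$, whose modulus is at most $1$ on $[0,A]$ and grows without bound on $[-A,-\kappa]$ as its degree increases. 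What makes the resulting $q(a_i)^2 a_i$ usable is precisely the factor $a_i$: it forces each summand to be small near $\{a_i=0\}$ no matter how large $q$ is, so a single $q$ simultaneously controls all $m$ terms.

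Finally I would calibrate and verify. Choosing $\delta<\kappa/m$ and $c>M+(m-1)\delta$, the inequality $q(s)^2 s\le\delta$ on $[0,A]$ (and negativity of the remaining summands) yields the uniform estimate $g\le m\delta$ on $B$. If $f(x)\ge\kappa$ then $f(x)-g(x)\ge\kappa-m\delta>0$; if instead $a_{i_0}(x)\le-\kappa$ for some $i_0$, then that summand is $\le -c$ while each of the other $m-1$ summands is $\le\delta$, whence $g(x)\le -c+(m-1)\delta$ and $f(x)-g(x)\ge -M+c-(m-1)\delta>0$. Since every $x\in B$ falls into at least one of these two cases, $f-g$ is strictly positive on $B$, as required.
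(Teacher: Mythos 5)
Your proof is correct and follows essentially the same route as the paper: there $g=\sum_j h(a_j)$ with $h(t)=t\left(\frac{t-\gamma}{\gamma+\eps}\right)^{2N}$, which is exactly your $q(t)^2\,t$ with $q$ taken as an explicit power (small on the bounded range of the $a_j$ over $B$, huge where some $a_j$ is substantially negative) rather than a Chebyshev-type polynomial. The only cosmetic difference is your single-margin dichotomy via $\kappa=\min_B\max\{f,-a_1,\ldots,-a_m\}$ in place of the paper's two-parameter $(\gamma,\eps)$ bookkeeping.
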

\begin{proof}
	Let $T:=\setcond{x \in B}{f(x) \le 0}$. We shall use $a$ as the function $x \mapsto (a_1(x),\ldots,a_m(x))$ from $\real^d$ to $\real^m$. The set $a(B)$ is compact. Hence there exists $\gamma \in \bF_{>0}$ such that $a(B) \subseteq (-\infty,2 \gamma]^m$. By the assumption on $f$ we have $a(T) \cap [0,2\gamma]^m = \emptyset$. Since $a(T)$ and $[0,2 \gamma]^m$ are compact, there exists $\eps \in \bF_{>0}$ such that $a(T) \cap [-2\eps,\gamma]^m = \emptyset$. By the choice of $\gamma$ and $\eps$ we see that if $x \in B$ and $a_j(x) \ge -2 \eps$ for every $j \in [m]$, then $f(x) > 0$. Consequently,
	\[
		\mu:= \min \biglsetcond{f(x)}{x \in B \ \text{and} \ a_j(x) \ge - 2 \eps  \ \forall j \in [m]} > 0.
	\]
	Consider the univariate polynomial $h(t):= t \left(\frac{t- \gamma}{\gamma + \eps}\right)^{2N} \in \real[t]$, where $N \in \natur$ is to be fixed below. One has
	\begin{align*}
		 0 \le h(t) \le \gamma \left( \frac{\gamma}{\gamma+\eps}\right)^{2N} =: & c(N) & & \text{on} & &  [0,2 \gamma], \\
		 -h(t) \ge 2 \eps \left( \frac{\gamma+ 2 \eps}{\gamma + \eps} \right)^{2N} =: & C(N) & & \text{on} & & (- \infty, - 2 \eps].
	\end{align*}
	We define
		$g(X) := \sum_{j=1}^m h(a_j(X))$. Let $x \in B$. If $a_j(x) \ge - 2 \eps$ for every $j \in [m]$, we have
		\[
			f(x) - g(x) \ge \mu - \sum_{j=1}^m h(a_j(x)) \ge \mu - \sum_{\overtwocond{j\in [m]}{a_j(x) \ge 0}} h(a_j(x)) \ge \mu - m \, c(N).
		\]
		If $a_j(x) \le - 2 \eps$ for some $j \in [m]$, we have
		\begin{align*}
			f(x)- g(x) \ge \min_{y \in B} f(y) - \sum_{\overtwocond{j \in [m]}{a_j(x) \ge 0}} h(a_j(x)) -  \sum_{\overtwocond{j \in [m]}{a_j(x) < 0}} h(a_j(x)) & \ge \min_{y \in B} f(y) - m \, c(N) + C(N).
		\end{align*}
		Since $c(N) \rightarrow 0$ and $C(N) \rightarrow +\infty$, as $N \rightarrow +\infty$, we deduce $f(x) - g(x)>0$ for every $x \in B$ by choosing $N$ sufficiently large.
\end{proof}

\begin{lemma} \label{schweighofer's second lemma} Let $S:=\{a_1 \ge 0,\ldots,a_m \ge 0\}$ be bounded. Let $\rho \in \bF_{>0}$ and let $\rho-\|X\|^2$ be strictly positive on $S$. Let $f \in \bF[X]$ be strictly positive on $S$. Then $f \in \cM(a,\rho-\|X\|^2)$.
\end{lemma}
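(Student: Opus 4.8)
The plan is to use the domain-transfer lemma to collapse the several constraints $a_1,\dots,a_m$ into the single ball constraint $\rho-\|X\|^2\ge 0$, thereby reducing the quadratic-module statement to a one-constraint problem, where the quadratic module and the preordering coincide and the usual passage from a preordering to a quadratic module is sidestepped. Concretely, set $B:=\setcond{x\in\real^d}{\|x\|^2\le\rho}$. Since $\rho-\|X\|^2$ is strictly positive on $S$, we have $S\subseteq B$, and $B$ is compact. Applying Lemma~\ref{domain transfer tool} to $f$ and $B$ produces $g\in\cM(a)$ such that $F:=f-g$ is strictly positive on $B$. Because $\cM(a)\subseteq\cM(a,\rho-\|X\|^2)$ and the latter is closed under addition, it suffices to prove $F\in\cM(\rho-\|X\|^2)$; and since $\rho-\|X\|^2$ is a single generator, $\cM(\rho-\|X\|^2)=\cP(\rho-\|X\|^2)$. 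Thus everything reduces to the following ball case: a polynomial strictly positive on $B=\{\rho-\|X\|^2\ge 0\}$ lies in $\cP(\rho-\|X\|^2)$.

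For the ball case I would invoke Stengle's positivstellensatz (Theorem~\ref{stengle}) for the one-element tuple $(\rho-\|X\|^2)$: there are $g',h'\in\cP(\rho-\|X\|^2)$ with $F(1+h')=1+g'$. Multiplying by $F$ gives $F^2(1+h')=F(1+g')$, and the left-hand side lies in $\cP(\rho-\|X\|^2)$, being $F^2$ plus $F^2 h'$, i.e. a square plus a square times a preordering element; hence $F(1+g')\in\cP(\rho-\|X\|^2)$, where $1+g'\ge 1$ on $B$. It remains to remove this positive multiplier. Here Lemma~\ref{woermann's first lemma} is the essential tool: it makes $\cP(\rho-\|X\|^2)$ archimedean, so that $1+g'$ is bounded above on $B$ by an explicit constant lying in the preordering. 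This lets one expand the reciprocal of the multiplier as a truncated geometric series with coefficients in $\cP(\rho-\|X\|^2)$ and write $F$ as a preordering element plus a remainder that is uniformly small on $B$.

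The main obstacle is exactly this last absorption step: the truncated geometric series always leaves a remainder of the form $F$ times a small preordering element, and since preorderings admit no subtraction one cannot simply discard it. Making the remainder genuinely vanish into $\cP(\rho-\|X\|^2)$ is the delicate heart of the argument, driven by the strict lower bound $\min_B F>0$ together with the two-sided archimedean bounds $t(F,\rho)\pm F\in\cP(\rho-\|X\|^2)$ of Lemma~\ref{woermann's first lemma}; this is precisely the single-constraint instance of Schmüdgen's theorem~(S), which is where Stengle's positivstellensatz is really being used. Once $F\in\cP(\rho-\|X\|^2)=\cM(\rho-\|X\|^2)$ is established, the proof concludes immediately, since $f=g+F\in\cM(a)+\cM(\rho-\|X\|^2)\subseteq\cM(a,\rho-\|X\|^2)$.
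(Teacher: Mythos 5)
Your opening move---using Lemma~\ref{domain transfer tool} to produce $g \in \cM(a)$ with $F := f-g$ strictly positive on a compact set $B \supseteq S$, and then trying to place $F$ in $\cM(\rho-\|X\|^2) = \cP(\rho-\|X\|^2)$---matches the paper's strategy. But your choice of $B$ as the ball $\{\rho-\|X\|^2 \ge 0\}$ forces you to prove the single-constraint case of Schm\"udgen's theorem from scratch, and that is where the proposal breaks down. The ``absorption'' step you describe (removing the multiplier $1+g'$ from $F(1+g') \in \cP(\rho-\|X\|^2)$ using the archimedean bounds of Lemma~\ref{woermann's first lemma}) is the entire difficulty, and you explicitly leave it unexecuted, calling it ``the delicate heart of the argument.'' A truncated geometric series does not close the gap, because, as you note, preorderings admit no subtraction; the standard way out is an induction that lowers a shift in fixed steps: from $pF = 1+q$ with $p,q \in \cP(\rho-\|X\|^2)$, $N-p \in \cP(\rho-\|X\|^2)$ and $\lambda + F \in \cP(\rho-\|X\|^2)$ one checks the identity $N\bigl(\lambda - \tfrac{1}{N} + F\bigr) = (N-p)(\lambda+F) + \lambda p + q$ and iterates until the shift becomes nonpositive. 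Without some such argument written out, the proof is incomplete precisely at its crux. Nor can you shortcut this by citing (S) for the tuple $(\rho-\|X\|^2)$: in this paper (S) is itself deduced from the present lemma (via Lemma~\ref{generalized woermann's lemma}), so that would be circular.

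The paper sidesteps all of this by choosing $B$ to be a large polytope rather than the ball: it takes $B = \{t+l_1 \ge 0,\ldots,t+l_k \ge 0\}$, where $\{l_1\ge 0,\ldots,l_k\ge 0\}$ is a bounded polyhedron and $t \ge \max_{i\in[k]} t(l_i,\rho)$, so that Lemma~\ref{woermann's first lemma} gives $t+l_i \in \cP(\rho-\|X\|^2)$ for each $i$. Handelman's theorem (H), already proved via P\'olya, then yields $f-g \in \cS(t+l_1,\ldots,t+l_k) \subseteq \cP(\rho-\|X\|^2) = \cM(\rho-\|X\|^2)$, with no Stengle and no absorption argument. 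This matters beyond elegance: Lemma~\ref{schweighofer's second lemma} feeds into the proof of (JP), which the paper claims does not depend on Stengle's positivstellensatz; your route would make (JP) depend on it and would forfeit one of the paper's main points.
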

\begin{proof}
Fix any linear $l_1,\ldots,l_k \in \bF[X]$ with $k \in \natur$ such that the polyhedron $\{l_1 \ge 0,\ldots,l_k \ge 0\}$ is nonempty and bounded (e.g., one can take $l_1,\ldots,l_k$ with $k=2d$ and $\{l_1 \ge 0,\ldots,l_k \ge 0\} = [0,1]^d$). By Lemma~\ref{woermann's first lemma}, one has $t+ l_1,\ldots,t + l_k \in \cP(\rho-\|X\|^2)$ for every $t \in \bF$ with $t \ge \max_{i \in [k]} t(l_i,\rho)$. The set $B:=\{t+l_1 \ge 0,\ldots,t+l_k \ge 0\}$ is bounded\footnote{This is easy to verify for various concrete choices of $l_1,\ldots,l_k$, e.g., in the case $k=2d$ and $\{l_1 \ge 0,\ldots,l_k \ge 0\}=[0,1]^d$. In the general situation the boundedness of $B$ follows from the fact that $B$ has the same recession cone as $\{l_1 \ge 0,\ldots,l_k \ge 0\}$. See, for example, \cite[\S8.2]{MR874114}.}. By Lemma~\ref{domain transfer tool} there exists $g \in \cM(a)$ such that $f-g$ is strictly positive on $B$. By (H), $f-g \in \cS(t+l_1,\ldots,t+l_k)$. By the choice of $t$ we have $\cS(t+l_1,\ldots,t+l_k) \subseteq \cP(\rho - \|X\|^2) = \cM(\rho - \|X\|^2)$. It follows $f \in \cM(a,\rho-\|X\|^2)$.
\end{proof}

The proof of Lemma~\ref{schweighofer's second lemma} can be compared with the proof of Theorem~3 from \cite[pp.\,8--9]{MR2142861}, in which the author uses P\'olya's theorem rather than (H). Lemma~\ref{generalized woermann's lemma} is a somewhat more general form of Theorem~2.2 from \cite{MR1870623} (see also \cite[The proof of Theorem~4]{MR1821179}).

\begin{lemma} \label{generalized woermann's lemma}
	Let $h \in \bF[X]$ and $\rho \in \bF_{>0}$. Then there exists $\rho' \in \bF_{>0}$ such that $\rho'  - \|X\|^2 \in \cM\bigl(h,(1+h)(\rho-\|X\|^2)\bigr)$.
\end{lemma}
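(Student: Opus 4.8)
The target is an explicit membership, so the plan is to produce, for a suitable $\rho'$, sums of squares $\sigma_0,\sigma_1,\sigma_2\in\bF[X]$ (i.e.\ elements of $\cone_\bF\{p^2:p\in\bF[X]\}$) with
\[
	\rho'-\|X\|^2=\sigma_0+\sigma_1\,h+\sigma_2\,(1+h)(\rho-\|X\|^2).
\]
Write $q:=(1+h)(\rho-\|X\|^2)$ for the second generator. Equivalently, one has to show that $\cM(h,q)$ is Archimedean; this is the \emph{ball-producing} direction, and in contrast to Lemma~\ref{schweighofer's second lemma} it cannot simply adjoin a ball as a new generator, so a genuine algebraic identity is needed. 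As a first step I would record an a priori bound on $h$ over the ball $\{\|X\|^2\le\rho\}$: Lemma~\ref{woermann's first lemma} applied to $h$ gives $t\in\bF_{>0}$ (namely $t=t(h,\rho)$) with $t\pm h\in\cM(\rho-\|X\|^2)$, and since $\cM(\rho-\|X\|^2)=\cP(\rho-\|X\|^2)$ is a preordering, hence closed under products, also $t^2-h^2=(t-h)(t+h)\in\cM(\rho-\|X\|^2)$.

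Next I would exploit the one membership that comes \emph{for free} from the defining factorisation of $q$. Namely,
\[
	(\rho-\|X\|^2)+\rho\,h=(1+h)(\rho-\|X\|^2)+\|X\|^2\,h\in\cM(h,q),
\]
because the right-hand side is $q$ plus $\|X\|^2 h$, and $\|X\|^2 h=\bigl(\sum_{i=1}^d X_i^2\bigr)h$ is a sum of squares times the generator $h$. Adding the constant $\rho'-\rho$ shows that $\rho'-\|X\|^2+\rho\,h\in\cM(h,q)$ for every $\rho'\ge\rho$. Thus the ball is already in the module up to the surplus term $\rho\,h\ge 0$, and the entire difficulty is to \emph{cancel this surplus}.

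To do so I would prove that $h$ is bounded from above inside the module, i.e.\ that $M-h\in\cM(h,q)$ for some $M\in\bF_{>0}$; adding a suitable nonnegative multiple of this to the displayed membership then deletes the $h$-term and leaves $(\rho'+\rho M)-\|X\|^2\in\cM(h,q)$, which is the assertion. For the upper bound on $h$ I would induct on $\deg h$. The leading homogeneous part of $h$ is routed through whichever of the two generators carries the correct sign: in $\sigma_0=\rho'-\|X\|^2-\sigma_1 h-\sigma_2 q$ the top-degree contribution of $-\sigma_2 q$ is $+\sigma_2\,h\,\|X\|^2$, so a positive-semidefinite leading form of $h$ is absorbed by $\sigma_2 q$, whereas a negative-semidefinite leading form is absorbed directly by $-\sigma_1 h$. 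The a priori estimate $t^2-h^2\in\cM(\rho-\|X\|^2)$, together with the elementary membership above, is then used to feed the lower-order remainder back into $\cM(h,q)$, strictly lowering $\deg h$. In the base case $\deg h\le 2$ one takes $\sigma_2=c$ a large constant and $\sigma_1=c\|X\|^2$; the computation collapses to $\sigma_0=(\rho'-c\rho)+(c-1)\|X\|^2-c\rho\,h$, and for $c$ large the positive-definite term $(c-1)\|X\|^2$ dominates the (at most quadratic) surplus $-c\rho\,h$, so a single completion of squares produces an $\SOS$ and finishes.

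The main obstacle is precisely this cancellation of $\rho\,h$, equivalently the Archimedean property of $\cM(h,q)$: naive cone manipulations are circular, since extracting $(\rho-\|X\|^2)$ from $q$ always re-creates a $+h$ term while bounding $h$ seems to require the ball. The degree reduction and the sign-splitting of the leading form of $h$ are what break this loop. The one delicate point I would have to treat carefully is an \emph{indefinite} leading form of $h$, where neither generator alone controls the top degree; there I would split the leading form using its own sum-of-squares data and send the two parts through the two generators separately before descending in the induction.
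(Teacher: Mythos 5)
Your reduction is sound up to one point, and that point is exactly where the whole difficulty of the lemma sits. Writing $q:=(1+h)(\rho-\|X\|^2)$, your observation that $(\rho-\|X\|^2)+\rho h=q+\|X\|^2h\in\cM(h,q)$ is correct and is precisely the first two terms of the paper's certificate; and you correctly isolate the remaining task as producing $M-h\in\cM(h,q)$ for some $M\in\bF_{>0}$. But your proposed proof of that upper bound does not work as written. In the base case you fix $\sigma_2=c$, $\sigma_1=c\|X\|^2$ and arrive at $\sigma_0=(\rho'-c\rho)+(c-1)\|X\|^2-c\rho h$; for $h=KX_1^2$ with $K>1/\rho$ the coefficient of $X_1^2$ in $\sigma_0$ is $c(1-\rho K)-1<0$ for every $c>0$, so $\sigma_0$ is unbounded below along the $X_1$-axis and no completion of squares (and no choice of $\rho'$, which only shifts the constant term) can make it an SOS. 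The inductive step is likewise only a heuristic: "routing the leading form through the generator of the correct sign" is not an argument when the leading form is indefinite, and an indefinite form of degree $2k$ is not in general a difference of two PSD forms of the same degree, so the splitting you invoke is not available. As it stands, the key step $M-h\in\cM(h,q)$ is asserted, not proved.

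The irony is that you already hold all the ingredients for a one-line proof of that step. From Lemma~\ref{woermann's first lemma} you have $t-h\in\cP(\rho-\|X\|^2)$ with $t=t(h,\rho)$, say $t-h=\tau_0+\tau_1(\rho-\|X\|^2)$ with $\tau_0,\tau_1$ sums of squares. Multiplying by $(1+h)$ transfers this certificate into $\cM(h,q)$:
\[
	(1+h)(t-h)=\tau_0+\tau_0h+\tau_1 q\in\cM(h,q),
\]
and then the square $(t/2-h)^2$ exactly cancels the unwanted $-h^2$:
\[
	(1+h)(t-h)+(t/2-h)^2 \;=\; \bigl(t+t^2/4\bigr)-h \;\in\;\cM(h,q),
\]
which is the bound $M-h\in\cM(h,q)$ with $M=t+t^2/4$. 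Adding $\rho$ times this to your membership $(\rho-\|X\|^2)+\rho h\in\cM(h,q)$ gives $\rho(1+t/2)^2-\|X\|^2\in\cM(h,q)$, i.e.\ $\rho'=\rho(1+t/2)^2$. This is precisely the paper's proof; replace your induction on $\deg h$ by this identity and your argument is complete.
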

\begin{proof}
	By Lemma~\ref{woermann's first lemma} there exists $t=t(h,\rho)$ such that $t- h \in \cP(\rho-\|X\|^2)$. It follows that
	\begin{align*}
		\cM\bigl((1+h)(\rho-\|X\|^2), h\bigr) & \ni (1 + h)(\rho-\|X\|^2) + h\|X\|^2 + \rho (1 + h)(t- h) + 
\rho(t/2- h)^2 \\ & = \rho (1 + t/2)^2 - \|X\|^2.
	\end{align*}
	Thus, one can define $\rho' := \rho(1+t/2)^2$.
\end{proof}

\begin{proof}[Proof of (JP), (P) and (S)]
	We start with (JP). Assume that $l_1,\ldots,l_k \in \cM(a)$, where $k \in \natur$, are all linear and $\{l_1 \ge 0,\ldots,l_k \ge 0\}$ is bounded. Without loss of generality let $\{ l_1 \ge 0,\ldots,l_k \ge 0\} \subseteq [-1,1]^d$. We notice that 	
	\begin{align*}
		d - \|X\|^2 & = \frac{1}{2}  \sum_{i=1}^d \Bigl((1+X_i)^2 (1-X_i)  + (1-X_i)^2 (1+X_i) \Bigr) \\ & \in \cM(1-X_1,\ldots,1-X_d,1+X_1,\ldots,1+X_d).
	\end{align*}
	By the Farkas lemma $1 \pm X_i \in \cone_\bF(1,l_1,\ldots,l_m) \subseteq \cM(a)$ for every $i \in [d]$. Hence $d - \|X\|^2 \in \cM(a)$. The polynomial $1+d - \|X\|^2$ is strictly positive on $S$ and belongs to $\cM(a)$. Thus, in view of Lemma~\ref{schweighofer's second lemma}, we deduce $f \in \cM(a,1+ d - \|X\|^2) \subseteq \cM(a)$.

	For showing (S) we choose $\rho \in \bF_{>0}$ such that $\rho-\|X\|^2$ is strictly positive on $S$.
By Stengle's positivstellensatz, applied to the polynomial $\rho-\|X\|^2$ strictly positive on $S$, there exist $g, h \in \cP(a)$ such that $\rho-\|X\|^2 = (1+g)/(1+h)$ and $g, h \in \cP(a)$. Hence $(1+h ) (\rho-\|X\|^2) \in \cP(a)$. Then, in view of Lemma~\ref{generalized woermann's lemma}, there exists $\rho' \in \bF_{>0}$ such that $\rho'-\|X\|^2 \in \cP(a)$. By Lemma~\ref{schweighofer's second lemma}, $f \in \cM(a,\rho'-\|X\|^2)$. Thus, $f \in \cP(a)$.

	Let us show (P). Assume $g \in \cM(a)$ and $\{g \ge 0\}$ is bounded. By Lemma~\ref{domain transfer tool} there exists $h \in \cM(a)$ such that $f - h$ is strictly positive on $\{ g \ge 0\}$. By (S), $f-h \in \cP(g) = \cM(g) \subseteq \cM(a)$. Hence $f \in \cM(a)$.
\end{proof}

\providecommand{\bysame}{\leavevmode\hbox to3em{\hrulefill}\thinspace}
\providecommand{\MR}{\relax\ifhmode\unskip\space\fi MR }
\providecommand{\MRhref}[2]{%
  \href{http://www.ams.org/mathscinet-getitem?mr=#1}{#2}
}
\providecommand{\href}[2]{#2}

\end{document}